\newtheorem{thm}{Theorem}
\newtheorem{prop}[thm]{Proposition}
\newtheorem{lemma}[thm]{Lemma}
\newtheorem{claim}[thm]{Claim}
\newtheorem{conj}[thm]{Conjecture}
\newtheorem{question}[thm]{Question}
\newtheorem{fact}[thm]{Fact}
\newtheorem{observation}[thm]{Observation}
\def\Nat{{\mathbb{N}}}
\def\Real{{\mathbb{R}}}
\def\calS{{\mathcal{S}}}
\def\calB{{\mathcal{B}}}
\DeclareMathOperator{\calI}{\mathcal{I}}
\begin{document}
\title{Counterexamples to a conjecture of Harris on Hall ratio}

\author{
Adam Blumenthal\thanks{Iowa State University, Department of Mathematics, Iowa State University, Ames, IA. E-mail: {\tt ablument@iastate.edu}.}
\and
Bernard Lidický\thanks{Iowa State University, Department of Mathematics, Iowa State University, Ames, IA. E-mail: {\tt lidicky@iastate.edu}.
{Research of this author is partially supported by NSF grant DMS-1600390.}}
\and
Ryan R. Martin\thanks{Iowa State University, Department of Mathematics, Iowa State University, Ames, IA. E-mail: {\tt rymartin@iastate.edu}.
Research of this author is partially supported by a grant from the Simons Foundation (\#353292).
}
\and
Sergey Norin\thanks{Department of Mathematics and Statistics, McGill University, Montreal, Canada. E-mail: {\tt snorin@math.mcgill.ca}.
Research of this author is supported by an NSERC grant 418520.}
\and
Florian Pfender\thanks{Department of Mathematical and Statistical Sciences, University of Colorado Denver, E-mail: {\tt Florian.Pfender@ucdenver.edu}.
{Research of this author is partially supported by NSF grant DMS-1600483.}}
\and
Jan Volec\thanks{
Department of Mathematics, Faculty of Nuclear Sciences and Physical Engineering, Czech Technical University in Prague, Trojanova 13, 12000 Praha, Czech Republic. E-mail: {\tt jan@ucw.cz}.
Previous affiliations:
Faculty of Informatics, Masaryk University, Botanická 68A, 602 00 Brno, Czech Republic,
Department of Mathematics and Statistics, McGill University, Montreal, Canada,
where this author was supported by CRM-ISM fellowship, and
Department of Mathematics, Emory University, Atlanta, USA. This project has received funding
from the European Union’s Horizon 2020 research and innovation programme under the Marie Sk\l{}odowska-Curie grant
agreement No. 800607.
}
}

\date{}
\maketitle

\begin{abstract}
The Hall ratio of a graph $G$ is the maximum value of $v(H) / \alpha(H)$ taken
over all non-null subgraphs $H \subseteq G$. For any graph, the Hall ratio is a lower-bound
on its fractional chromatic number.
In this note, we present various constructions of graphs whose fractional
chromatic number grows much faster than their Hall ratio. This refutes a
conjecture of Harris.
\end{abstract}

\section{Introduction}
A graph $G$ is \emph{$k$-colorable} if its vertices can be colored with $k$ colors so
that adjacent vertices receive different colors.  The minimum integer $k$ such
that $G$ is $k$-colorable is called \emph{the chromatic number} of $G$, and it is
denoted by $\chi(G)$.

Various refinements and relaxations of the chromatic number have been considered in the
literature. One of the classical and most studied ones is the \emph{fractional
chromatic number}, which we denote by $\chi_f(G)$; see Section~\ref{sec:frac} for its definition.

A basic averaging argument reveals that $\chi_f(G) \ge v(G) / \alpha(G)$,
where $v(G)$ and $\alpha(G)$ are the number of vertices and the size
of a largest independent set in $G$, respectively.
Moreover, since $\chi_f(G) \ge \chi_f(H)$ for a subgraph $H \subseteq G$, it holds that
\[ \chi_f(G) \ge \frac{v(H)}{\alpha(H)} \quad \mbox{ for every non-null }H\subseteq G.\]
We define $\rho(G)$ --- the \emph{Hall ratio} of a graph $G$ --- to be the best
lower-bound obtained in this way, i.e.,
\[\rho(G) := \max_{\emptyset \neq H \subseteq G} \; \frac{v(H)}{\alpha(H)}
\,.\]

How tight is $\rho(G)$ as a lower bound for $\chi_f(G)$?
In~2009, Johnson~\cite{bib:johnson} suggested that there are graphs $G$ where
the value of $\chi_f(G) / \rho(G)$ is unbounded.
In earlier versions of~\cite{bib:harris} (see~\cite[Conjecture 6.2]{bib:harris:arXiv}),
Harris explicitly conjectured the opposite.
\begin{conj}
\label{conj:hall}
There exists $C$ such that $\chi_f(G) \le C\cdot\rho(G)$ for every graph~$G$.
\end{conj}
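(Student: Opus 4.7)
The plan is to refute Conjecture~\ref{conj:hall} by exhibiting an infinite sequence of graphs $\{G_n\}$ for which $\chi_f(G_n)/\rho(G_n) \to \infty$. A guiding observation is that no vertex-transitive graph can serve as a counterexample: on such a graph one has $\rho(G) = v(G)/\alpha(G) = \chi_f(G)$. Hence any construction must break symmetry carefully, pushing the fractional chromatic number up while keeping $v(H)/\alpha(H)$ small \emph{simultaneously for every} non-null $H \subseteq G_n$, not merely for $H = G_n$.

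My first attempt would be a purely probabilistic construction, say $G \sim G(n,p)$ with $p = p(n)$ tuned so that $\chi_f(G)$ grows with $n$, together with an upper bound on $\rho(G)$ obtained via second-moment estimates on $\alpha(H)$ and a union bound over subgraph orders $m \le n$. However, for $G(n,p)$ one has $\chi_f(G) \sim np/(2\ln(np))$ while $v(G)/\alpha(G)$ is of the same order, so the bare random model cannot produce a gap and some structural twist is required. A natural such twist is a two-level ``blow-up and randomize'' construction: take a base graph $B$ with large fractional chromatic number (a Kneser-type graph, a Mycielski iterate, or a small random graph), replace each vertex $v \in V(B)$ by an independent block $T_v$ of size $t$, and insert random bipartite edges between $T_u$ and $T_v$ whenever $uv \in E(B)$ at a carefully tuned density. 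The goal is that a fractional clique in $B$ should lift via LP duality to certify that $\chi_f$ of the blown-up graph is still of order $\chi_f(B)$, while the interior independent sets of the blocks $T_v$ keep the Hall ratio bounded by a constant depending only on the density inside each block.

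The main obstacle is the \emph{uniform} upper bound on $\rho(G_n)$: one must control $\alpha(H)/v(H)$ for every induced $H$, not just for the whole graph. This requires sharp concentration of $\alpha(H)$ together with a union bound strong enough to survive exponentially many subgraphs, plus a separate case analysis for subgraphs $H$ that are small or atypical in how they distribute across the blocks $T_v$. The matching lower bound on $\chi_f(G_n)$ should be comparatively routine by LP duality, via an explicit vertex weighting that witnesses a large fractional clique inherited from $B$.
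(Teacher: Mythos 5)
Your preliminary observations are sound (vertex-transitive graphs and bare $G(n,p)$ cannot produce a gap between $\chi_f$ and $\rho$), but what follows is a plan rather than a proof, and the plan has a real hole at exactly the point you flag as ``the main obstacle.'' In your ``blow-up and randomize'' construction, lifting a fractional clique from $B$ is \emph{not} automatic when the bipartite connections between blocks are sparsified: the lower bound $\chi_f(B\{G_1,\dots,G_\ell\}) \ge \chi_f(B)\cdot\min_i\chi_f(G_i)$ (the paper's Proposition~\ref{prop:fracprod}) requires the full join between blocks, and once you thin those bipartite graphs, $\chi_f$ of the blown-up graph can collapse. Conversely, with the full join the construction is just the lexicographic product $B[\overline{K_t}]$, which satisfies $\rho = \chi_f = \chi_f(B)$ and opens no gap. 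Worse, even at intermediate densities the construction is in trouble: choosing one vertex from each block $T_v$ induces a random (or adversarial) subgraph of $B$, whose $v/\alpha$ can still be large, so $\rho$ is not controlled. You have correctly identified that the crux is bounding $\alpha(H)/v(H)$ for \emph{every} $H$, but you have not supplied the mechanism that does it.

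The paper's construction resolves this with an idea absent from your sketch: take the \emph{full} join $G = \bigwedge_{i=1}^\ell G_i$ of random-graph-like triangle-free graphs $G_i = G^1(n_i,C)$ (from Proposition~\ref{prop:GnD}) whose orders $n_i$ grow doubly exponentially ($n_{j+1} \approx e^{2n_j^2}$). The full join makes $\chi_f(G) \ge \ell C$ immediate via Proposition~\ref{prop:fracprod}. The Hall ratio is then controlled by a dichotomy on an arbitrary $X\subseteq V(G)$: indices $i$ are ``small'' if $|X\cap V(G_i)| < \sqrt{\ln n_i}$ (then that piece is locally sparse, hence $3$-colorable by Observation~\ref{obs:simple} and property~(\ref{prop:B})), and ``big'' otherwise (then the largest big index dominates $|X|$ because the $n_i$ grow so fast, and that single $G_m$ has $\chi(G_m) < 1.001C$). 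Either way one extracts an independent set of size $\Omega(|X|/P)$. This size-staircase trick is the missing idea; it is what lets the full-join lower bound on $\chi_f$ coexist with a bounded $\rho$, something your equal-block-size blow-up cannot achieve.
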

In 2016, Barnett~\cite{bib:barnett} constructed graphs showing
that if such a constant $C$ exists, then $C \ge 343 / 282 \sim 1.216$ improving an earlier bound $1.2$~\cite{bib:dhj}.
Our first result refutes Conjecture~\ref{conj:hall}.

\begin{thm}\label{thm:ce}
There exists $P_0$ such that for every $P \ge P_0$, there is a graph~$G$ with $\rho(G) \le P$ and $\chi_f(G) > P^2 / 33$.
\end{thm}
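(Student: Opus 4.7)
The goal is to exhibit, for each sufficiently large $P$, a graph $G$ with $\rho(G)\le P$ and $\chi_f(G) > P^2/33$. A purely random construction cannot suffice: for $G(n,p)$ one has $\chi_f(G)\sim \chi(G) \sim n/\alpha(G)$ asymptotically, so $\chi_f(G) = \Theta(\rho(G))$ and no quadratic gap is possible. The construction must therefore combine randomness (which will control $\rho$) with structure (which will force $\chi_f$ well above~$v/\alpha$, breaking the coincidence that holds for vertex-transitive graphs).

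First, I would choose $G$ from a structured random ensemble. Natural candidates are random subgraphs of a fixed graph with rich symmetry (a Cayley graph, a Kneser-type graph, or a blow-up of a small base graph on which $\chi_f/\rho>1$), or a random graph on a product vertex set $V = A \times B$ with a two-level rule governing edge probabilities. The two-level structure is what will let us bound $\rho$ by a bound depending only on one coordinate, while forcing the fractional chromatic number to accumulate contributions from both coordinates — heuristically giving a product of two linear-in-$P$ effects, hence the $P^2$ growth.

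Second, I would upper bound $\rho(G)\le P$ by a first-moment/union-bound argument. For each $m\le n$, the expected number of $m$-subsets $S\subseteq V(G)$ with $\alpha(G[S])<m/P$ is at most $\binom{n}{m}$ times the probability that a fixed $m$-set contains no independent set of size $\lceil m/P\rceil$. With the parameters of the construction tuned correctly, the union over all $m$ is $o(1)$, so with positive probability every induced subgraph $H\subseteq G$ satisfies $v(H)/\alpha(H)\le P$. This step is standard and not where the difficulty lies.

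Third, I would lower bound $\chi_f(G)>P^2/33$ via LP duality: produce a nonnegative vertex weighting $w\colon V(G)\to\mathbb{R}_{\ge 0}$ for which $\sum_v w(v) > (P^2/33)\cdot\max_{I\text{ indep.}}\sum_{v\in I}w(v)$. The natural weighting will be dictated by the product structure of $V(G)$; proving the upper bound on the maximum weight of an independent set should reduce, via the structural rule, to a combinatorial statement about independent sets respecting both coordinates.

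\textbf{Where the difficulty lies.} The main obstacle is step three. For any vertex-transitive graph $\chi_f = v/\alpha \le \rho$, and for $G(n,p)$ the two quantities match up to $(1+o(1))$, so the construction must break vertex-transitivity in a very controlled way: structured enough that $\chi_f$ is genuinely larger than any $v(H)/\alpha(H)$, yet random enough that \emph{every} induced subgraph still has $\alpha(H)\ge v(H)/P$. Balancing these two requirements, and in particular carrying through the fractional lower bound with a constant that yields $1/33$, is the technical heart of the argument.
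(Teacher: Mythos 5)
Your plan correctly identifies the tension — you need $\chi_f$ to escape the ``$v/\alpha$ of some subgraph'' bound while every induced subgraph still has a large independent set — but it stops short of the construction that resolves it, and the two concrete proof steps you do sketch would not carry through.

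The paper's construction is $G = \bigwedge_{i=1}^{\ell} G_i$, a \emph{join} of $\ell \approx P/4$ sparse Erd\H{o}s--R\'enyi graphs $G_i = G^1(n_i,C)$ with $C\approx P/8$, where the orders $n_i$ grow tower-fast: $n_{i+1} = \lceil e^{2n_i^2}\rceil$. Three ideas drive the proof, and all three are absent from your plan. \textbf{(i)} The join forces every independent set of $G$ to live inside a single $G_i$, and Proposition~\ref{prop:fracprod} then gives $\chi_f(G) \ge \ell\cdot\min_i \chi_f(G_i) > \ell C > P^2/33$ immediately — there is no need to hand-craft an LP-dual weighting. \textbf{(ii)} The orders must be wildly different. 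If all $n_i$ were equal, taking $X=V(G)$ already gives $v(G)/\alpha(G) \approx \ell C \approx P^2/32 \gg P$, so $\rho$ would not be bounded. The tower growth ensures that whenever a subset $X$ has a ``large'' intersection (at least $\sqrt{\ln n_i}$) with some part $G_i$, that single part swamps the total contribution of all smaller parts. \textbf{(iii)} For the parts $G_i$ in which $X$ is ``small'' (fewer than $\sqrt{\ln n_i}$ vertices), one uses that sparse random graphs have locally bounded edge density: any subgraph on at most $\sqrt{\ln n}$ vertices has at most as many edges as vertices, hence is $3$-colorable (Observation~\ref{obs:simple}), so the largest such $X_i$ alone yields a big enough independent set. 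Your ``two-level rule on a product vertex set'' gestures at something like a blow-up, but without the join-forces-IS-into-one-part mechanism and without the size hierarchy, neither the lower bound on $\chi_f$ nor the upper bound on $\rho$ goes through.

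Your step 2 is also off: a first-moment union bound over all induced subgraphs of the final graph $G$ is hopeless here, because $v(G)$ is tower-sized and the small subgraphs are where the estimate is weakest. In the paper, $\rho(G)\le P$ is proved \emph{deterministically} from the properties of the individual $G_i$ (which are themselves obtained probabilistically, but only one union bound per $G_i$ over its own $\le \sqrt{\ln n_i}$-vertex subgraphs). The case split into ``small'' and ``big'' coordinates is the argument, not a union bound over subsets. In short, the plan names the right obstruction but lacks the decisive construction — the join with tower-separated orders — and the concrete steps as written would not close the gap.
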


The proof of Theorem~\ref{thm:ce} is very short and simple, modulo some standard results about random graphs.
The following two theorems strengthen Theorem~\ref{thm:ce} at the expense of somewhat more technical proofs.

\begin{thm}\label{thm:k5free}
There exists $P_0$ such that for every $P \ge P_0$ there is a $K_5$-free graph $G$
with $\rho(G) \le P$ and $\chi_f(G) > P^2 / 82$.
\end{thm}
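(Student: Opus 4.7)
The plan is to modify the random graph construction from the proof of Theorem~\ref{thm:ce} so as to destroy all copies of $K_5$, using the standard alteration method and absorbing the resulting loss into the larger constant $82$.

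Concretely, I would first take $G_0 \sim G(n,p)$ with $n$ and $p$ close to the values used for Theorem~\ref{thm:ce}, but with $p$ slightly adjusted if necessary so that the expected number of copies of $K_5$ in $G_0$, namely $\binom{n}{5}p^{10}$, is at most $\varepsilon n$ for a small constant $\varepsilon > 0$. By Markov's inequality, with positive probability $G_0$ has at most $2\varepsilon n$ copies of $K_5$ simultaneously with the high-probability events controlling $\rho(G_0)$ and $\alpha(G_0)$ that are used in the proof of Theorem~\ref{thm:ce}. Fixing such a realization of $G_0$, I delete one vertex from each copy of $K_5$ to obtain a $K_5$-free graph $G \subseteq G_0$ with $v(G) \ge (1-2\varepsilon)n$.

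The verification of the bounds is then essentially automatic: since $G$ is an induced subgraph of $G_0$, we have $\rho(G) \le \rho(G_0) \le P$ and $\alpha(G) \le \alpha(G_0)$, so
\[
\chi_f(G) \;\ge\; \frac{v(G)}{\alpha(G)} \;\ge\; (1-2\varepsilon)\,\frac{v(G_0)}{\alpha(G_0)}.
\]
For $\varepsilon$ small enough, the bound $\chi_f(G_0) > P^2/33$ inherited from Theorem~\ref{thm:ce} translates to $\chi_f(G) > P^2/82$. The main obstacle will be matching the stated constant $82$: the ratio loss by a factor of roughly $82/33 \approx 2.5$ must absorb both the decrease in $p$ required to suppress $K_5$-copies and the deletion overhead. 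Careful optimization of $n$, $p$, and $\varepsilon$, together with sharper concentration bounds on $\alpha$ for induced subgraphs of $G_0$ (needed to bound the Hall ratio uniformly over subsets), is where the bulk of the technical work lies.
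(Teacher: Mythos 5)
Your proposal rests on a misreading of what the graph in Theorem~\ref{thm:ce} actually is. It is not a single Erd\H{o}s--R\'enyi graph $G(n,p)$; it is the join $G = \bigwedge_{i=1}^{\ell} G_i$ of $\ell \approx P/4$ sparse random graphs of wildly different orders. The $K_5$'s (indeed $K_\ell$'s) in that graph are not a sparse collection of accidental cliques to be cleaned up by an alteration: picking any one vertex from each of the $\ell$ factors produces a $K_\ell$, so $G$ contains $\prod_i n_i$ copies of $K_\ell$ and every vertex lies in an enormous number of $K_5$'s. To make $G$ $K_5$-free by vertex deletion you would have to delete all vertices from all but four of the $\ell$ factors, which demolishes the lower bound $\chi_f(G) \ge \ell \cdot C$ from Proposition~\ref{prop:fracprod}. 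The alteration method is the right tool for killing a rare substructure in a single random graph; here the cliques are built into the join by design, so no deletion strategy with small overhead exists. Relatedly, if you did start from a single $G(n,p)$, you could never reach $\chi_f > P^2/33$ while keeping $\rho \le P$, since for a single random graph $\chi_f$ and $\rho$ are both $\Theta(n/\alpha)$; the quadratic gap between the two requires the compositional structure.

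The paper's actual route replaces the implicit outer $K_\ell$ of the join by a triangle-free graph. Writing the join as $K_\ell\{G_1,\dots,G_\ell\}$, one substitutes a triangle-free $H = G^1(m,C)$ from Proposition~\ref{prop:GnD} to form $G = H\{G_1,\dots,G_m\}$ with triangle-free factors $G_i$. A clique in $H\{G_1,\dots,G_m\}$ projects to a clique in $H$ and intersects each factor in a clique of that factor, so the clique number is at most $2\cdot 2 = 4$ and $G$ is $K_5$-free. The price is that one must re-prove the Hall-ratio bound for this more general composition, which is done in Lemma~\ref{lem:recursive} (applied with $k=2$); the extra factor of roughly $(1.001\cdot 3^2)^2 \approx 81.2$ in the denominator, giving $82$, comes from the two levels of $3$-colorability used when bounding $\rho$, not from any deletion overhead. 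This is a structurally different construction, not a perturbation of the Theorem~\ref{thm:ce} graph, and your proposed optimization of $n$, $p$, $\varepsilon$ cannot substitute for it.
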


\begin{thm}\label{thm:recursive}
There exists $P_0$ such that for all $P \ge P_0$ there is a graph $G$ with $\rho(G) \le P$ and $\chi_f(G) \ge e^{\ln^2(P)/5}$.
\end{thm}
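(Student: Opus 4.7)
The approach is to iterate the construction of Theorem~\ref{thm:ce} to achieve a quasipolynomial separation between $\chi_f$ and $\rho$. Fix absolute constants $c_0 \le c \le e^{5/2}$ and $c_1 > 0$. I would construct a graph operation $\Phi$ such that, for every graph $G$ with $\rho(G)$ larger than some threshold $R_0$,
\begin{equation*}
  c_0 \cdot \rho(G) \;\le\; \rho(\Phi(G)) \;\le\; c \cdot \rho(G), \qquad \chi_f(\Phi(G)) \;\ge\; c_1 \cdot \rho(G) \cdot \chi_f(G).
\end{equation*}
Letting $G_0$ be a graph from Theorem~\ref{thm:ce} and setting $G_{k+1} := \Phi(G_k)$, induction yields $c_0^k P_0 \le P_k \le c^k P_0$ and $Q_k \ge c_1^k\, Q_0\, c_0^{k(k-1)/2} P_0^k$, where $P_k := \rho(G_k)$ and $Q_k := \chi_f(G_k)$. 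Hence $\ln P_k = \Theta(k)$ and $\ln Q_k \ge (k^2/2) \ln c_0 - O(k)$, so eliminating $k$ gives $\ln Q_k \ge \frac{\ln c_0}{2\ln^2 c}\ln^2 P_k - O(\ln P_k)$. Choosing $c_0 = c = e^{5/2}$ yields $\ln Q_k \ge \ln^2(P_k)/5$ for $P_k$ sufficiently large, which proves Theorem~\ref{thm:recursive} by taking $G := G_k$ for an appropriate $k$.

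The natural first candidate for $\Phi$ is the lexicographic substitution $\Phi(G) := H[G]$ for a template $H$ produced by the proof of Theorem~\ref{thm:ce} with $\chi_f(H) \gtrsim \rho(G)$. This immediately gives $\chi_f(H[G]) = \chi_f(H)\chi_f(G) \gtrsim \rho(G)\chi_f(G)$, meeting the target for $\chi_f$. The trouble is the Hall ratio: for an induced subgraph $F \subseteq H[G]$ with projection $U \subseteq V(H)$ and pieces $F_u := F \cap G_u$, the inequality $\alpha(F) \ge \sum_{u \in U} \alpha(F_u)/\chi_f(H[U])$ (a standard LP-duality argument applied to an optimal fractional coloring of $H[U]$) combined with $\alpha(F_u) \ge |V(F_u)|/\rho(G)$ yields only $\rho(H[G]) \le \chi_f(H)\rho(G)$, which is too weak by a factor of roughly $\rho(G)$. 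Moreover this bound is essentially tight when $H$ is replaced by an independent-set blow-up, since the weighted Hall ratio of $H$ attains $\chi_f(H)$ by LP duality. So a bare substitution forces $\rho(\Phi(G)) \lesssim \rho(G)^2$, which is far too large.

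The main technical obstacle is therefore to replace $\Phi$ by a randomized variant for which the Hall ratio of the product grows only by a bounded factor. The intended route is to build the template $H$ inside the proof of Theorem~\ref{thm:ce} with extra structural control — say, pseudorandomness of independent sets in every sufficiently large induced subgraph — and then to invoke the Lovász Local Lemma or a second-moment concentration argument to show that every induced subgraph of $\Phi(G)$ inherits the Hall ratio bound on copies of $G$. In effect, one needs to force the weighted Hall ratio of $H$, restricted to those weight profiles that arise from substituted copies of $G$, to remain close to $\rho(H)$ rather than to $\chi_f(H)$. With such a refinement in place, careful tracking of the constants $c_0, c_1, c$ (subject to $c \le e^{5/2}$) through the iteration gives the quasipolynomial bound $\chi_f(G_k) \ge e^{\ln^2(P_k)/5}$ claimed in Theorem~\ref{thm:recursive}.
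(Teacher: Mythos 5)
There is a genuine gap: you correctly identify the central obstruction (a bare lexicographic substitution $H[G]$ can have $\rho(H[G])$ as large as $\chi_f(H)\cdot\rho(G)\approx\rho(G)^2$, because an independent-set blow-up of $H$ realizes the weighted Hall ratio, which equals $\chi_f(H)$), but you never construct an operation $\Phi$ that avoids it. The appeal to ``pseudorandomness of independent sets in every sufficiently large induced subgraph'' and to the Lov\'asz Local Lemma or second-moment concentration is only a gesture; the difficulty is structural, not probabilistic. Once $H$ is fixed, an adversary choosing the induced subgraph of $H[G]$ can always restrict to a fixed independent-set blow-up of $H$ and weight the copies of $G$ according to the optimal dual solution for $H$, and no amount of random choice of $H$ in advance removes that worst case. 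Without a concrete mechanism that breaks this, your claimed invariants $\rho(\Phi(G))\le c\cdot\rho(G)$ and $\chi_f(\Phi(G))\ge c_1\,\rho(G)\,\chi_f(G)$ are unsubstantiated, and the parameter bookkeeping in your opening paragraph (while arithmetically fine) rests on a construction that does not exist in your write-up.

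For comparison, the paper's Lemma~\ref{lem:recursive} sidesteps your obstruction by a deterministic device: the pieces $G_1,\dots,G_m$ substituted into the template $H=G^1(m,C)$ are not copies of the same graph but have sizes $b_i$ growing Ackermann-fast, and the induction additionally maintains a ``small subgraphs are $3^k$-colorable'' invariant. Given any induced subgraph, either most of its vertices sit in pieces where the intersection is small (and hence $3^{k-1}$-colorable by the invariant, so one color class of the proper $\chi(H)$-coloring already yields a large independent set), or most of them sit inside the single largest piece $G_z$, which dwarfs the total size of all earlier pieces combined; in that case $\rho(G_z)$ controls things. This replaces your factor of $\chi_f(H)\approx\rho(G)$ in the Hall-ratio growth by a constant factor $3$ per level, giving $\rho(G^k)\le 1.001\cdot 3^kC$ while $\chi_f(G^k)\ge C^k$, and the quasipolynomial bound follows by choosing $C\approx\sqrt{P}$ and $k\approx\log_3 C$. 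Your outline would need to discover and carry through an analogous invariant (or a genuinely different one) to turn into a proof; as written it identifies the right target but stops short of the argument.
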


This note is organized as follows. In Section~\ref{sec:def}, we recall
definitions and properties of the fractional chromatic number, and
Erd\H{o}s-Rényi random graphs. Proofs of our results are in
Section~\ref{sec:results}. We conclude the note by Section~\ref{sec:outro}
with related open problems.

\section{Definitions and preliminaries}\label{sec:def}
The join of two graphs $G_1$ and $G_2$, which we denote by $G_1 \wedge G_2$,
is obtained by taking vertex-disjoint copies of $G_1$ and $G_2$, and adding all
the edges between $V(G_1)$ and $V(G_2)$.  More generally, for graphs
$G_1,G_2,\dots G_\ell$, we write $\bigwedge\limits_{i=1}^\ell G_i$ to denote
$\left(\bigwedge\limits_{i=1}^{\ell-1} G_i\right) \wedge G_\ell$. 

For a graph $H$ on the vertex-set $\{1,\dots,\ell\}$ and a collection
of $\ell$ vertex-disjoint graphs $G_1,\dots,G_\ell$, we define $H\{G_1,\dots,G_\ell\}$ to
be the graph obtained by taking a union $G_1,\dots, G_\ell$, and,
for every edge $ij \in E(H)$, adding all the edges between $V(G_i)$ and $V(G_j)$.
Note that if $G_1 \cong \dots \cong G_\ell$, then $H\{G_1,\dots,G_\ell\}$
corresponds to the composition (also known as the lexicographic product) of $G$
and $H$.  Also, observe that \[K_\ell\{G_1,\dots,G_\ell\} =
\bigwedge\limits_{i=1}^\ell G_i.\]

\subsection{Fractional chromatic number}\label{sec:frac}
We present a definition of the fractional chromatic number based on a linear
programming relaxation of an integer program computing the ordinary
chromatic number. For a graph $G$, let $\calI(G)$ be the set of all 
independent sets.  Let \textsc{fracc} be the following linear program.

\[
 \textsc{fracc} \begin{cases}
 \begin{array}{lll} 
   \text{Minimize }    & \displaystyle  \sum_{I \in \calI(G)} x_I \\
    \text{subject to }  & \displaystyle \sum_{\substack{I \in \calI(G)\\v\in I}} x_I \ge 1   &
    \quad \text{for  $v \in V(G)$;} \\
                               & x_I \ge 0  & \quad \text{for $I \in \calI(G)$.}
 \end{array}
 \end{cases}
\]
Furthermore, let \textsc{fracd} be the following program, which is the dual of \textsc{fracc}.
\[
 \textsc{fracd} \begin{cases}
 \begin{array}{lll} 
   \text{Maximize }    & \displaystyle  \sum_{v \in V(G)} y(v) \\
    \text{subject to }  & \displaystyle \sum_{\phantom{G}v \in I\phantom{(G}} y(v) \le 1 &
    \quad \text{for $I \in \calI(G)$;} \\
                               & y(v) \ge 0 & \quad \text{for $v \in V(G)$.}
 \end{array}
 \end{cases}
\]
Since these two linear programs are dual of each other, the LP-duality theorem
ensures that they have the same value, which we denote by $\chi_f(G)$.

Let us now mention a different way to introduce the fractional chromatic number.
As we have already mentioned, $\alpha(G) \ge v(G) / \chi_f(G)$.
Moreover, the lower-bound stays valid even in the setting where the vertices have
weights, and we measure the size of an independent set by the proportion of the weight it occupies
rather than its cardinality.

More precisely, let $G=(V,E)$ be a graph and $w: V \to \Real_+$ a weight function. Let $\alpha(G,w)$
be the maximum sum of the weights of the vertices that form an independent set, i.e.,
\[\alpha(G,w) := \max\limits_{I \in \calI} \sum_{v\in I} w(v)\,.\]
If we rescale an optimal solution of \textsc{fracc} by a factor $1/\chi_f(G)$
and interpret it  as a probability distribution on $\calI$, the linearity of expectation yields that
\[
\alpha(G,w) \ge 
\mathbb{E}_I\left[ \sum_{v \in I} w(v)\right] =
{\sum_{v \in V} w(v) \cdot \sum_{\substack{I \in \calI(G)\\v\in I}}  \frac{x_I}{\chi_f(G)}}
\ge \frac{\sum_{v \in V} w(v)}{\chi_f(G)}\,.
\]
On the other hand, any optimal solution of \textsc{fracd} yields a weight function $w_0$ for which the bound is tight, i.e., 
$\alpha(G,w_0) = {\sum_{v \in V} w_0(v)}/{\chi_f(G)}$. Therefore, \[
\chi_f(G) = \sup_{w: V \to [0,1]} \frac{\sum_{v\in V} w(v)}{\alpha(G,w)}\,.
\]
Note that the Hall ratio can be viewed as an integral version of the above, since
\[\rho(G) = \max_{w: V \to \{0,1\}} \frac{\sum_{v\in V} w(v)}{\alpha(G,w)}\,.\]
For other possible definitions of the fractional chromatic number, see~\cite{bib:ScUl97}.

We finish this section with a straightforward generalization of the fact that
the fractional chromatic number of the composition of two graphs is equal to
the product of their fractional chromatic numbers.
\begin{prop}
\label{prop:fracprod}
Let $H$ be a graph with the vertex-set $\{1,\dots,\ell\}$ and let $G_1,\dots,G_\ell$ be graphs.
It holds that $\chi_f(H) \cdot \min\limits_{i\in [\ell]} \chi_f(G_i) \le \chi_f(H\{G_1,\dots,G_\ell\})$.
In particular, $\chi_f\left( \bigwedge_{i=1}^\ell G_i \right) \ge \ell \cdot \min\limits_{i\in [\ell]} \chi_f(G_i)$.
\end{prop}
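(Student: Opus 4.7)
My plan is to use the weighted dual characterization $\chi_f(G) = \sup_w \sum_v w(v)/\alpha(G,w)$ recalled in Section~\ref{sec:frac}, and to exhibit a single weight function on $V(H\{G_1,\dots,G_\ell\})$ whose Rayleigh-type ratio already beats $\chi_f(H)\cdot \min_i \chi_f(G_i)$.

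Set $m := \min_{i\in [\ell]} \chi_f(G_i)$. For each $i$, I would fix dual-optimal weights $w_i$ on $V(G_i)$; since the ratio is scale-invariant, I may rescale so that $\alpha(G_i, w_i) = 1$, which forces $\sum_v w_i(v) = \chi_f(G_i) \ge m$. Similarly I would fix weights $y$ on $V(H)$ with $\alpha(H,y) = 1$ and $\sum_j y(j) = \chi_f(H)$. On $V(H\{G_1,\dots,G_\ell\})$, I then define $W(v) := y(i)\cdot w_i(v)$ for $v \in V(G_i)$. The numerator of the Rayleigh ratio factorises pleasantly: $\sum_v W(v) = \sum_i y(i) \sum_{v \in V(G_i)} w_i(v) \ge m \cdot \chi_f(H)$, so the whole statement reduces to verifying that $\alpha(H\{G_1,\dots,G_\ell\}, W) \le 1$.

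The bound on $\alpha$ is the only step where the structure of the composition $H\{G_1,\dots,G_\ell\}$ enters, and I would regard it as the heart of the argument. The point is that if $I$ is independent in $H\{G_1,\dots,G_\ell\}$ and $I_i := I \cap V(G_i)$, then each $I_i$ is independent in $G_i$, and the support set $J := \{i : I_i \neq \emptyset\}$ is independent in $H$; otherwise two populated parts $G_i, G_j$ with $ij \in E(H)$ would contribute an edge of the composition inside $I$. Summing $W$ over $I$ then gives $\sum_{i \in J} y(i) \sum_{v \in I_i} w_i(v) \le \sum_{i \in J} y(i)\cdot \alpha(G_i, w_i) \le \alpha(H,y) = 1$, as required. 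Combining with the previous paragraph yields $\chi_f(H\{G_1,\dots,G_\ell\}) \ge m\cdot \chi_f(H)$, and the \emph{In particular} clause is just the case $H = K_\ell$, using $\chi_f(K_\ell) = \ell$.

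I do not expect any genuine obstacle. The only mildly delicate points are the rescaling of the dual-optimal weights (the stated supremum is over $[0,1]$-valued weights, but scale-invariance of the ratio makes this free) and the observation that the index support of an independent set in the composition is itself independent in $H$, which is exactly what allows the single product-style weight function $W$ to do the work simultaneously on the $H$-side and the $G_i$-side.
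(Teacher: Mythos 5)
Your proof is correct and is essentially the paper's own proof, just phrased through the weighted Rayleigh-quotient characterization $\chi_f(G) = \sup_w \sum_v w(v)/\alpha(G,w)$ instead of the dual LP \textsc{fracd} directly; these are the same thing, since the constraint ``$\alpha(G,W)\le 1$'' is exactly dual feasibility. Your product weight $W(v)=y(i)\,w_i(v)$ is the paper's $y_{i,j}=w^H_i\cdot w^i_j$, and the key verification (split $I$ into the $I_i$'s, note each is independent in $G_i$, note the support $J$ is independent in $H$) is identical.
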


\begin{proof}
Without loss of generality, we may assume that $V(G_i) = \{1,\dots,v(G_i)\}$.
Let $w^H_1,\dots,w^H_\ell$ be any optimal solution of the dual program \textsc{fracd} for $H$,
and, for every $i\in[\ell]$, let $w^i_1,\dots,w^i_{v(G_i)}$, be any optimal solution of \textsc{fracd}
for $G_i$.

Let $G:=H\{G_1,\dots,G_\ell\}$. For a vertex $(i,j) \in V\left(G\right)$, where $i \in [\ell]$ and $j \in [v(G_i)]$,
we set $y_{i,j}:=w^H_i \cdot w^i_j$. It holds that
\[
\sum_{(i,j) \in V(G)} y_{i,j} = \sum_{i \in [\ell]} w^H_i \cdot \sum_{j \in V(G_i)} w^i_j =
\sum_{i \in [\ell]} w^H_i \cdot \chi_f(G_i) \ge \chi_f(H) \cdot \min\limits_{i\in [\ell]} \chi_f(G_i)\,.
\]
We claim that $\left(y_{i,j}\right)$, where $(i,j)\in V(G)$, is a feasible solution of \textsc{fracd} for $G$.

Indeed, fix any $I \in \calI(G)$. For $i \in [\ell]$, let $I_i := \{ j\in[v(G_i)]: (i,j) \in I \}$. Since $I_i \in \calI(G_i)$, it holds that
\[\sum\limits_{j \in I_i} w^H_i \cdot w^i_j=w^H_i \cdot \sum\limits_{j \in I_i} w^i_j \le w^H_i\,.\]
On the other hand, the set $I_H := \{i \in [\ell]: \exists (i,j) \in I\}$ is independent in $H$. Therefore,
\[
\sum_{(i,j) \in I} y_{i,j} = \sum_{i \in I_H} \sum_{j \in I_i} y_{i,j} = \sum_{i \in I_H} w^H_i \cdot \sum_{j \in I_i} w^i_j \le \sum_{i \in I_H} w^H_i \le 1\,.
\]
\end{proof}
We note that a similar composition of optimal solutions of \textsc{fracc} yields
$\chi_f(H\{G_1,\dots,G_\ell\}) \le \chi_f(H) \cdot \max\limits_{i \in [\ell]} \chi_f(G_i)$,
but we will never need this bound. However, we will use the following analogue of this
bound for proper colorings.

\begin{prop}
\label{prop:col}
Let $H$ be a graph with the vertex-set $\{1,\dots,\ell\}$ and let $G_1,\dots,G_\ell$ be graphs.
It holds that $\chi(H) \cdot \max\limits_{i\in [\ell]} \chi(G_i) \ge \chi(H\{G_1,\dots,G_\ell\})$.
\end{prop}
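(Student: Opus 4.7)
The plan is to build an explicit proper coloring of $G:=H\{G_1,\dots,G_\ell\}$ by taking the Cartesian product of a proper coloring of $H$ with proper colorings of the $G_i$. Set $k:=\max_{i\in[\ell]} \chi(G_i)$. Fix a proper coloring $c_H:[\ell]\to[\chi(H)]$ of $H$, and, for each $i\in[\ell]$, fix a proper coloring $c_i:V(G_i)\to[k]$ of $G_i$ (using at most $k$ colors since $\chi(G_i)\le k$).

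For each vertex of $G$, which by the definition of $H\{G_1,\dots,G_\ell\}$ can be written as $(i,v)$ with $i\in[\ell]$ and $v\in V(G_i)$, define
\[ c(i,v) := (c_H(i),\, c_i(v)) \in [\chi(H)]\times[k]. \]
This uses at most $\chi(H)\cdot k$ colors, so it remains only to show that $c$ is a proper coloring of $G$.

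To verify this, I would take any edge of $G$ and recall that by construction of $H\{G_1,\dots,G_\ell\}$ such an edge is either (a) an edge of some $G_i$ lying between two vertices $(i,u)$ and $(i,v)$ with $uv\in E(G_i)$, or (b) an edge between some $(i,u)$ and $(j,v)$ with $i\ne j$ and $ij\in E(H)$. In case (a) the first coordinate of $c$ agrees but $c_i(u)\ne c_i(v)$ since $c_i$ is proper, so $c(i,u)\ne c(i,v)$. In case (b), $c_H(i)\ne c_H(j)$ since $c_H$ is proper on $H$, so again the colors differ. Thus $c$ is proper, giving $\chi(G)\le \chi(H)\cdot k$, which is the claim.

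There is no real obstacle here: the argument is just the standard product-coloring construction, mirroring the composition bound for $\chi_f$ alluded to in the preceding remark, and it works at the integer level precisely because integer colorings compose via the Cartesian product.
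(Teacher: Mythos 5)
Your proof is correct and is essentially the same as the paper's: both construct the product coloring $(c_H(i), c_i(v))$ from a proper $\chi(H)$-coloring of $H$ and proper $k$-colorings of the $G_i$ with $k = \max_i \chi(G_i)$. The only difference is that you spell out the case analysis verifying properness, which the paper dismisses as straightforward.
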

\begin{proof}
Let $k:=\max_{i\in [\ell]} \chi(G_i)$, and $d$ be a proper $\chi(H)$-coloring of $H$.
Next, for every $i \in [\ell]$, let $c_i: V(G_i) \to [k]$ be a proper $k$-coloring of $G_i$.
 It is straightforward to verify that assigning each vertex $v \in V_i$ a color $\left(d(i),c_i(v)\right)$
yields a proper coloring of $\chi\left(H\{G_1,\dots,G_\ell\}\right)$ using $\chi(H) \cdot k$ colors.
\end{proof}

Finally, the following observation is going to be useful in the next section.
\begin{observation}
\label{obs:simple}
If every $H\subseteq G$ has at most $|V(H)|$ edges, then $\chi(G)\le 3$.
\end{observation}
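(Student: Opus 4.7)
The hypothesis is hereditary: if every subgraph $H \subseteq G$ has $e(H) \le v(H)$, then the same bound holds for every subgraph of any $H' \subseteq G$ (those are themselves subgraphs of $G$). So my plan is to leverage this to show $G$ is $2$-degenerate, and then invoke greedy coloring.

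For the degeneracy step, I would take an arbitrary nonempty $H \subseteq G$ and use the handshake identity $\sum_{v \in V(H)} \deg_H(v) = 2 e(H) \le 2 v(H)$. Averaging, some vertex $v \in V(H)$ has $\deg_H(v) \le 2$. Hence every nonempty subgraph of $G$ contains a vertex of degree at most $2$, i.e.\ $G$ is $2$-degenerate.

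The coloring step is then the standard degeneracy argument. I would order the vertices $v_1, v_2, \dots, v_n$ by repeatedly removing a vertex of degree at most $2$ in the remaining subgraph (possible by the previous paragraph applied at each stage), and list them in the reverse order of removal. Then color greedily in this order with colors from $\{1,2,3\}$: when $v_i$ is colored, its already-colored neighbors are precisely its neighbors in $G[\{v_1,\dots,v_i\}]$ other than itself, and there are at most $2$ of them, so some color in $\{1,2,3\}$ is available. This produces a proper $3$-coloring, proving $\chi(G) \le 3$. No step poses any real obstacle; the only thing to be careful about is that the degeneracy property must be verified for every subgraph, not just for $G$ itself, which is immediate from the hereditary nature of the hypothesis.
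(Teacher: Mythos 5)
Your proof is correct but takes a different route than the paper. The paper's argument is more direct: after reducing to the case that $G$ is connected (applying the hypothesis to each component), it notes that a connected graph with $|E(G)|\le|V(G)|$ has cyclomatic number at most $1$, i.e.\ it is a tree or unicyclic, and such graphs are easily seen to be $3$-colorable. Your argument instead extracts $2$-degeneracy from the hypothesis via the handshake lemma applied to every (induced) subgraph, and then invokes the standard greedy coloring of a $d$-degenerate graph with $d+1$ colors. The two are comparable in length; the paper's is a touch more elementary (no explicit degeneracy machinery, and it uses the hypothesis only on connected components rather than on all subgraphs), while yours is perhaps more systematic and immediately generalizes: the same argument shows that if every subgraph $H$ has at most $\frac{d}{2}\,|V(H)|$ edges then $\chi(G)\le d+1$. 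Both are complete and valid.
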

\begin{proof}
Without loss of generality, we may assume $G$ is connected. Since $|E(G)| \le |V(G)|$, the graph $G$
contains at most one cycle and hence it is $3$-colorable.
\end{proof}

\subsection{Sparse Erd\H{o}s-Rényi random graphs}\label{sec:GnD}
Let $G_{n,p}$ be a random graph on $\{1,2,\dots,n\}$ where each pair of vertices
forms an edge independently with probability $p$.
We now recall some well-known properties of~$G_{n,\frac Dn}$ we are going to use.

\begin{prop}\label{prop:GnD}
There exists $C_0$ such that for every $C \ge C_0$ the following is true:
There exists $n_0=n_0(C) \in \Nat$ such that for every $n \ge n_0$ there is
an~$n$-vertex triangle-free graph $G=G^1(n,C)$ with the following properties:
\begin{enumerate}[(A)]
\item $1.001 \cdot C > \chi(G) \ge \chi_f(G) \ge \frac n{\alpha(G)} > C$,
and
\item for all $k\le \sqrt{\ln n}$, every $k$-vertex subgraph of $G$ is $3$-colorable. \label{prop:B}
\end{enumerate}
\end{prop}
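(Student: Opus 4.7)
My plan is to take $G$ to be a sparse Erd\H{o}s--R\'enyi random graph $G_{N, D/N}$ for carefully chosen constants $D = D(C)$ and $N$ slightly larger than $n$, then delete one vertex from every triangle and trim to exactly $n$ vertices. I would pick $D$ large enough that the classical asymptotics $\alpha(G_{N,D/N}) = (1+o(1))(2N \ln D)/D$ and $\chi(G_{N,D/N}) = (1+o(1)) D/(2\ln D)$ (due to Bollob\'as and {\L}uczak) hold to within any prescribed multiplicative error, and so that the target value $D/(2\ln D)$ lies just above $C$, say equal to $(1+\delta) C$ for some small $\delta = \delta(C)$ with $\delta \to 0$ as $C \to \infty$. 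Then for $N$ large, with high probability $N/\alpha(G_{N,D/N})$ and $\chi(G_{N,D/N})$ both lie in a narrow window around $(1+\delta) C$, which for sufficiently small $\delta$ fits strictly inside $(C, 1.001\,C)$.

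The expected number of triangles in $G_{N, D/N}$ is $\binom{N}{3}(D/N)^3 = (1+o(1)) D^3/6$, so Markov's inequality gives that with high probability there are at most $D^3$ triangles. Deleting one vertex from each costs at most $D^3$ vertices, yielding an induced subgraph $G'$ that is triangle-free, has at least $N - D^3$ vertices, and satisfies $\alpha(G') \le \alpha(G_{N,D/N})$ and $\chi(G') \le \chi(G_{N,D/N})$. Choosing $N = n + D^3 + 1$ and then deleting a few more vertices if necessary yields an $n$-vertex graph; the inequalities in (A) follow from the asymptotics above together with $\chi_f \le \chi$ and $\chi_f \ge n/\alpha$, provided $D^3$ is negligible compared to $n$, which one arranges by enlarging $n_0$.

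For (B), I would invoke Observation~\ref{obs:simple} and show that with high probability $G_{N, D/N}$ contains no subgraph on $k \le \sqrt{\ln N}$ vertices with more than $k$ edges. A first-moment calculation bounds the expected number of $k$-vertex subgraphs with at least $k+1$ edges by $\binom{N}{k}\binom{\binom{k}{2}}{k+1}(D/N)^{k+1}$; after Stirling-type simplification this is of order $N^{-1}(eD)^{O(k)}$ times a polynomial in $k$, and a geometric series absorbs subgraphs with still more edges. Summing over $4 \le k \le \sqrt{\ln N}$ gives $o(1)$, since $(eD)^{\sqrt{\ln N}} = N^{o(1)}$ for $D$ fixed. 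Because $G'$ is a subgraph of $G_{N,D/N}$ and $\sqrt{\ln n} \le \sqrt{\ln N}$, property (B) is inherited by the final graph.

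The main obstacle is the tight factor-$1.001$ window in (A): one needs a precise form of the statement that for sparse random graphs the chromatic number is asymptotically $n/\alpha$, which is essentially the content of {\L}uczak's theorem on $\chi(G_{n,D/n})$. One must also verify that the $D^3$ vertex deletions in the triangle-cleanup step do not push $n/\alpha$ back below $C$; this is where the margin $\delta C$ built into the choice of $D$ is spent.
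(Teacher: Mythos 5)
Your approach is correct and closely parallels the paper's, with one genuine structural difference: how triangle-freeness is obtained. You use the alteration method — bound the number of triangles via Markov (expected $\Theta(D^3)$, so at most $O(D^3)$ with probability bounded away from $0$), delete one vertex per triangle, and trim to exactly $n$ vertices, verifying that $\alpha$, $\chi$, and the ``no small dense subgraph'' property are all inherited by induced subgraphs. The paper instead invokes Schürger's result that the triangle count of $G_{n,D/n}$ converges to a Poisson distribution with mean $\Theta(D^3)$, so the graph is \emph{already} triangle-free with probability $e^{-\Theta(D^3)}$, a constant in $n$; since the remaining good events fail with probability $o(1)$, a union bound gives a triangle-free $n$-vertex graph with all desired properties directly, with no vertex bookkeeping. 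Both routes are sound. Yours costs a little extra care (choosing $N = n + O(D^3)$, tracking that $n/N \to 1$, and spending a margin $\delta$ so that $n/\alpha$ stays above $C$ after trimming), whereas the paper's is slicker; on the other hand, your alteration argument is more self-contained, not requiring the Poisson convergence theorem. One small remark: the margin $\delta$ you build into $D/(2\ln D) = (1+\delta)C$ is not strictly necessary on the $\alpha$ side, since Frieze's theorem already yields $\alpha(G_{n,D/n}) < 2n\ln D/D$ with room to spare (the second-order terms are negative); the paper exploits exactly this by taking $C = D/(2\ln D)$ with no slack. You do, however, need $\delta$ (or that slack) small enough not to break the $\chi < 1.001\,C$ bound, which you correctly note requires $C$ large.
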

\begin{proof}
Suppose that $C$ and $n$ are sufficiently large, and let $D > 1$ be such that $C=\frac D{2\cdot\ln D}$.
By~\cite{bib:frieze} and~\cite{bib:luczak}, a random graph $G_{n,\frac Dn}$
satisfies with high probability  $\alpha(G_{n,\frac Dn}) < n/C$ and $\chi(G_{n,\frac Dn}) < 1.001 \cdot C$, respectively.

Next, the expected number of subgraphs $H$ in $G_{n,\frac Dn}$ with $v(H) \le \sqrt{\ln n}$ and more than $v(H)$ edges
is at most
\[
\sum\limits_{k=3}^{\sqrt{\ln n}}
2^{k^2} \cdot n^k \cdot \left(\frac Dn\right)^{k+1}
\le \sqrt{\ln n} \cdot \frac{D^{\sqrt{\ln n}+1}}{n^{1-\ln 2}}
= O\left(n^{-0.3}\right).
\]
By Markov's inequality, $G_{n,\frac Dn}$ has no such $H$ with high probability,
hence the property (\ref{prop:B}) follows from Observation~\ref{obs:simple}.

Finally, Schürger~\cite{bib:schurger} showed that the number of triangles in
$G_{n,\frac Dn}$ converges to the Poisson distribution with mean $\Theta\left(D^3\right)$.
Therefore, $G_{n,\frac Dn}$ is triangle-free with probability $e^{-\Theta\left(D^3\right)} > 0$.
Note that a similar estimate can also be deduced using the FKG inequality.
\end{proof}

\section{Counter-examples to Conjecture~\ref{conj:hall}}\label{sec:results}

We start with a simple construction of a sequence of graphs for which $\chi_f(G)\gg\rho(G)$.
Each graph $G$ is the join of the graphs $G^1(n_i,C)$ of very different orders.

\begin{proof}[Proof of Theorem~\ref{thm:ce}]
Let $C_0$ be the constant from Proposition~\ref{prop:GnD}, and $P_0 := 8C_0$.

Given $P\ge P_0$, let $\ell := \lfloor P/4 \rfloor$, $C:=P/8$, and $n_1:=n_0(C)$ from Proposition~\ref{prop:GnD}.
For all $j \in [\ell-1]$, let $n_{j+1} := \left\lceil e^{2 \cdot n_j^2}\right\rceil$, and, for all $i \in [\ell]$,
let $G_i := G^1(n_i,C)$.
We set $G := \bigwedge\limits_{i=1}^\ell G_i$.

By Proposition~\ref{prop:fracprod}, $\chi_f(G) > \ell \cdot C > P^2/33$. It only remains to
prove that $\rho(G) \le P$, i.e., that $\alpha(G[X]) \ge v(G[X])/P$ for every $X \subseteq V(G)$.

Fix $X\subseteq V(G)$, and let $X_i:=V(G_i) \cap X$ for $i \in [\ell]$.
We split the indices  into two categories,
\emph{small} and \emph{big}, based on $|X_i|$ with respect to $v(G_i)=n_i$.
Specifically, let
\[
\calS := \left\{ i \in [\ell] : |X_i| < \sqrt{\ln n_i} \right\},
\quad
\mbox{and}
\quad
\calB := [\ell] \setminus \calS.
\]
Next, let $H_S$ and $H_B$ be the subgraphs of $G$ induced by $\bigcup\limits_{i \in \calS} X_i$
and $\bigcup\limits_{i \in \calB} X_i$, respectively, and $v_s$ and $v_b$ their respective orders.
In both of these subgraphs, we can find quite large independent sets.
\begin{claim}\label{cl:ce1}
$H_S$ has an independent set of size at least ${4v_s}/{3P}$.
\end{claim}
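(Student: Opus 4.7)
The plan is to observe that $H_S$ inherits the join structure of $G$, and then exploit property~(\ref{prop:B}) of Proposition~\ref{prop:GnD} together with the additivity of the chromatic number under joins.

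First I would note that since $G = \bigwedge_{i=1}^\ell G_i$ is a join, its induced subgraph on $\bigcup_{i\in\calS} X_i$ is exactly $\bigwedge_{i \in \calS} G_i[X_i]$. For each $i \in \calS$ we have $|X_i| < \sqrt{\ln n_i}$, so property~(\ref{prop:B}) of Proposition~\ref{prop:GnD} applied to $G_i$ gives $\chi(G_i[X_i]) \le 3$. Since the chromatic number of a join is the sum of the chromatic numbers of the parts, this yields
\[
\chi(H_S) \;=\; \sum_{i \in \calS} \chi(G_i[X_i]) \;\le\; 3|\calS| \;\le\; 3\ell.
\]

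Next I would invoke the elementary pigeonhole bound $\alpha(F) \ge v(F)/\chi(F)$ valid for any graph $F$, together with $\ell = \lfloor P/4 \rfloor \le P/4$, to conclude
\[
\alpha(H_S) \;\ge\; \frac{v_s}{\chi(H_S)} \;\ge\; \frac{v_s}{3\ell} \;\ge\; \frac{v_s}{3P/4} \;=\; \frac{4v_s}{3P},
\]
as required.

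There is no real obstacle here: the entire argument hinges on recognizing that the ``small'' indices are precisely those for which Proposition~\ref{prop:GnD}(\ref{prop:B}) guarantees a $3$-coloring of the induced piece, and that the join structure translates these cheap colorings into a cheap coloring of $H_S$ of total cost at most $3\ell$. The constant $4/3$ in the statement is exactly what one gets from comparing $3\ell$ to $P$ via the choice $\ell = \lfloor P/4 \rfloor$.
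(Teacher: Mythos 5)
Your proof is correct and arrives at the same bound, though by a slightly different route than the paper. The paper simply picks a single index $i \in \calS$ maximizing $|X_i|$ (so $|X_i| \ge v_s/|\calS|$ by pigeonhole), notes that $G_i[X_i]$ is $3$-colorable by property~(\ref{prop:B}), and takes the largest color class there — which is automatically independent in $H_S$ since independent sets of a join lie entirely within one part. You instead bound the chromatic number of the \emph{whole} graph $H_S$ via the additivity of $\chi$ under joins and then apply $\alpha \ge v/\chi$ globally. The two arguments are morally the same — if you $3$-color each part with pairwise disjoint palettes, the color classes of $H_S$ are precisely the color classes of the individual parts, so the largest one is the one the paper picks — but yours is the marginally ``heavier'' variant, as it invokes the (standard, but not explicitly stated in the paper) fact $\chi(G_1 \wedge G_2) = \chi(G_1) + \chi(G_2)$, while the paper avoids mentioning $\chi(H_S)$ altogether. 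Both are fine; the paper's version is just a touch more economical.
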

\begin{proof}
Fix $i \in \calS$ such that $|X_i|$ is maximized. Note that $|X_i| \ge {v_s}/{|\calS|}$.
The property (\ref{prop:B}) of $G_i$ established in Proposition~\ref{prop:GnD}
yields that $G[X_i]$ is $3$-colorable, and hence its largest color class has size at least
\[\frac{v_s}{3 |\calS|} \ge \frac{v_s}{3\ell} \ge \frac{4v_s}{3P}\,,\]
which finishes the proof.
\end{proof}

\begin{claim}\label{cl:ce2}
$H_B$ has an independent set of size at least ${4v_b}/P$.
\end{claim}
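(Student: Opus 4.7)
The plan is to use that $G$ is a join of the $G_i$'s, so any independent set of $H_B$ is contained in a single $X_i$ with $i \in \calB$; it therefore suffices to exhibit one $i \in \calB$ for which $\alpha(G_i[X_i]) \ge 4 v_b/P$. I would take $i^* := \max \calB$ (if $\calB = \emptyset$ there is nothing to prove). The motivation is that the $n_j$ grow super-exponentially under the recurrence $n_{j+1} = \lceil e^{2 n_j^2}\rceil$, so I expect the single term $|X_{i^*}|$ to dominate $\sum_{j < i^*} |X_j|$ combined, placing essentially all of $v_b$ inside $X_{i^*}$.

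The key quantitative step is to show $|X_{i^*}| \ge (1/2 + \delta)\,v_b$ for some $\delta > 0$. On one side, for $j < i^*$ the trivial bound $|X_j| \le n_j$ together with the enormous ratios $n_{j+1}/n_j \ge e^{2 n_j^2}/n_j$ (which are astronomical once $n_1 = n_0(C)$ is moderately large) yields, by a geometric series estimate, $\sum_{j < i^*} n_j \le (1+\varepsilon)\, n_{i^*-1}$ for any prescribed $\varepsilon > 0$ once $P$ is large enough. On the other side, since $i^* \in \calB$,
\[
|X_{i^*}| \;\ge\; \sqrt{\ln n_{i^*}} \;\ge\; \sqrt{2\, n_{i^*-1}^2} \;=\; \sqrt{2}\, n_{i^*-1}.
\]
Combining these two estimates gives $\sum_{j<i^*} |X_j| \le \bigl((1+\varepsilon)/\sqrt{2}\bigr)\,|X_{i^*}|$, so $|X_{i^*}| \ge v_b/\bigl(1 + (1+\varepsilon)/\sqrt{2}\bigr)$, which for $\varepsilon$ small is comfortably above $v_b/2$ (say, exceeds $0.5005\,v_b$).

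To finish, Proposition~\ref{prop:GnD}(A) applied to $G_{i^*}$ yields $\chi(G_{i^*}) < 1.001\, C = 1.001\, P/8$, hence
\[
\alpha\!\left(G_{i^*}[X_{i^*}]\right) \;\ge\; \frac{|X_{i^*}|}{\chi(G_{i^*})} \;>\; \frac{8\,|X_{i^*}|}{1.001\, P} \;\ge\; \frac{4 v_b}{P},
\]
where the last inequality uses $|X_{i^*}| > 0.5005\, v_b$ from the previous step. Since this is an independent set of $G_{i^*}$ and hence of $H_B$, the claim follows.

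I expect the main subtlety to be balancing the constants: the factor $\sqrt{2}$ produced by the exponent $2$ in the recurrence $n_{j+1} = \lceil e^{2 n_j^2}\rceil$ just barely overcomes the $1.001$ slack in the chromatic number of the random graph. The crude bound $\sum_{j<i^*} n_j \le \ell\cdot n_{i^*-1}$ would be too weak once $\ell$ is large, so one must genuinely exploit the iterated exponentiation to see that $\sum_{j<i^*} n_j$ is only a $(1+o(1))$-factor above $n_{i^*-1}$ rather than a $\Theta(\ell)$-factor.
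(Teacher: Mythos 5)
Your proposal is correct and follows essentially the same route as the paper's own argument: both take $i^* := \max\calB$, show that the super-exponential growth of the $n_j$ forces $|X_{i^*}|$ to be a constant fraction of $v_b$ (the paper proves $1.9\,|X_{i^*}| \ge v_b$ via $|X_{i^*}| \ge \sqrt{\ln n_{i^*}} \ge \sqrt{2}\,n_{i^*-1}$ and $\sum_{j<i^*} n_j < 1.2\, n_{i^*-1}$), and then apply the $\chi(G_{i^*}) < 1.001\,C$ bound. Two small remarks: your final arithmetic is razor-thin because $8 \times 0.5005 / 1.001 = 4$ exactly, so the conclusion relies on the strict inequality $|X_{i^*}| > 0.5005\,v_b$ --- it would be cleaner to carry the genuine bound $|X_{i^*}| \gtrsim v_b/1.71 \approx 0.58\,v_b$ through to the end; and you should note explicitly that the $i^*=1$ case is trivial (then $v_b = |X_1|$ and the bound follows at once), since the inequality $|X_{i^*}| \ge \sqrt{2}\,n_{i^*-1}$ is only meaningful for $i^* \ge 2$.
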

\begin{proof}
Let $m$ be the largest element of $\calB$. Since $G_m$ is $(0.51\ell)$-colorable, $G[X_m]$ contains an independent
set of size at least $1.9 \cdot |X_m|/\ell$.
If $m=1$, then $|X_m| = v_b$. On the other hand, if $m \ge 2$, then 
\[
1.9 \cdot |X_m| \ge |X_m| + 0.9 \cdot \sqrt{\ln n_m} > |X_m| + 1.2 \cdot n_{m-1} > |X_m| + \sum\limits_{i=1}^{m-1} n_i \ge v_b.
\]
We conclude that $H_B$ has an independent set of size at least $v_b/\ell \ge 4v_b/P$.
\end{proof}

If $v_s \ge 3 |X| /4$, then we find an independent set of size at least $|X|/P$ in $H_S$ by Claim~\ref{cl:ce1}.
Otherwise, $v_b \ge |X| / 4$, and Claim~\ref{cl:ce2} guarantees an~independent set in $H_B$ of size at least $|X|/P$.
\end{proof}

\subsection{$K_5$-free and iterated constructions}

As we have already noted in Section~\ref{sec:def}, the graph $G=\bigwedge\limits_{i=1}^\ell G_i$ constructed in Theorem~\ref{thm:ce}
can be equivalently viewed as $K_\ell\{G_1,G_2,\dots,G_\ell\}$. An adaptation of the proof
of Theorem~\ref{thm:ce} will show that replacing $K_\ell$ by a graph from Proposition~\ref{prop:GnD}
yields another graph $G^2$ with $\chi_f\left(G^2\right) \sim \left(\rho\left(G^2\right)\right)^2$.
However, as all the graphs involved in the composition are now triangle-free, $G^2$ will be $K_5$-free.

But we do not need to stop here. Since we have now much better control on~the~chromatic numbers of small subgraphs in $G^2$
than in the original graph~$G$, replacing the graphs $G_i=G^1(n_i,C)$ in the composition by $n_i$-vertex
variants of $G^2$ yields a graph $G^3$ with $\chi_f\left(G^3\right) \sim \left(\rho\left(G^3\right)\right)^3$.
Repeating this procedure $k$-times leads to a construction of a graph~$G^{k+1}$ with $\chi_f\left(G^{k+1}\right) \sim \left(\rho\left(G^{k+1}\right)\right)^{k+1}$.

In order to present our proofs of Theorems~\ref{thm:k5free} and~\ref{thm:recursive},
we need to introduce some additional notation.  Let us start with recalling the
Knuth's up-arrow notation
\[
  a\uparrow^{(k)} b=
   \begin{cases}
    a^b & \text{if }k=1, \\
    1 & \text{if }k\ge 1\text{ and }b=0,\\
    a\uparrow^{(k-1)}(a\uparrow^{(k)}(b-1)) & \text{otherwise,}
   \end{cases}
\]
where $a,b,k \in \Nat$,
and its inverse $a\downarrow^{(k)} n$, which is the largest integer $b$ such that $n \ge a\uparrow^{(k)} b$.
Using this, we define the following Ackermann-type
function $F_k(b)$ and its inverse $f_k(b)$:
\[
F_k(b):= 2 \uparrow^{(k)} b \quad\quad \text{and} \quad\quad f_k(b) := 2
\downarrow^{(k)} b
.
\] 
Note that $F_1(b) = 2^b$ and $f_1(b) = \left\lfloor \log_2(b) \right\rfloor$,
and  for every $k \in \Nat$ it holds that $F_k(1)=2$ and $F_k(2)=4$. The functions also satisfy the following
properties:
\begin{fact}\label{fact}
For every $k \in \Nat$, the following holds:
\begin{enumerate}
\item $f_{k}(f_{k}(F_{k+1}(n+2)))=F_{k+1}(n)$ for every $n\in\Nat$,

\item $f_{k+1}(4M) < f_{k}(f_{k}(M))$ for every $M \ge F_k(F_k(7))$, and

\item $\sum_{b=0}^n F_k(b) < F_k(n+1)$ for every $n\in\Nat$.

\end{enumerate}
\end{fact}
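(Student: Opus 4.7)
The plan is to handle the three items in increasing order of difficulty, leaning on three easy auxiliary facts: (i) $F_k$ is strictly increasing, so $f_k(F_k(b))=b$ for all $b\in\Nat$; (ii) $F_k(m)\ge 2^m$ for all $k\ge 1$ and $m\ge 0$, by a short double induction starting from $F_1(m)=2^m$; and (iii) as a consequence, $F_k(n+1)\ge 2F_k(n)$ for $n\ge 1$, since for $k\ge 2$ one has $F_k(n+1)=F_{k-1}(F_k(n))\ge 2^{F_k(n)}\ge 2F_k(n)$.

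Part~(1) follows by unfolding the recursion twice, $F_{k+1}(n+2)=F_k(F_{k+1}(n+1))=F_k(F_k(F_{k+1}(n)))$, and then applying (i) twice to peel off the outer $F_k$'s. Part~(3) is a one-line induction on $n$: the base case $n=0$ is $F_k(0)=1<2=F_k(1)$, and the step reads $\sum_{b=0}^{n}F_k(b)<F_k(n)+F_k(n)\le F_k(n+1)$, the strict inequality coming from the inductive hypothesis and the doubling from (iii).

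Part~(2) is the main obstacle. Set $n:=f_k(f_k(M))$; by (i) this is equivalent to $F_k(F_k(n))\le M<F_k(F_k(n+1))$, and the hypothesis $M\ge F_k(F_k(7))$ together with strict monotonicity of $F_k\circ F_k$ forces $n\ge 7$. Since $4M<4F_k(F_k(n+1))$, it suffices to prove the cleaner estimate
\[
F_{k+1}(n)\ \ge\ 4\,F_k(F_k(n+1))\quad\text{for all }k\ge 1\text{ and }n\ge 7,
\]
as this then yields $F_{k+1}(n)>4M$ and hence $f_{k+1}(4M)<n=f_k(f_k(M))$. Writing $F_{k+1}(n)=F_k(F_k(F_{k+1}(n-2)))$ and using monotonicity of $F_k$, this reduces to two ingredients: the bound $F_{k+1}(n-2)\ge n+2$, which is immediate from (ii) because $2^{n-2}\ge n+2$ for $n\ge 7$, and the bound $F_k(F_k(n+2))\ge 4F_k(F_k(n+1))$. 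For the latter I would split on $k$: when $k\ge 2$, the recursion and (ii) give $F_k(x+1)=F_{k-1}(F_k(x))\ge 2^{F_k(x)}\ge 4F_k(x)$ as soon as $F_k(x)\ge 4$, and applying this at $x=F_k(n+1)$ together with $F_k(n+2)\ge F_k(n+1)+1$ closes the estimate; when $k=1$ the argument is a one-line squaring, $F_1(F_1(n+2))=F_1(F_1(n+1))^2\ge 4F_1(F_1(n+1))$, valid since $F_1(F_1(n+1))\ge 4$. The only real delicacy is this case split at the final step, together with the bookkeeping ensuring that $M\ge F_k(F_k(7))$ really does translate to $n\ge 7$ in the first inequality.
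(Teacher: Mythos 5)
Your argument is correct, and it differs from the paper's in a few small but genuine ways. For part~(1) both proofs are essentially the same unrolling trick. For part~(3) you induct on~$n$ and use the doubling bound $F_k(n+1)\ge 2F_k(n)$; the paper instead inducts on~$k$, using the geometric-series case $k=1$ as base and then bounding $\sum_{b\le n}F_{k+1}(b)$ by $\sum_{i\le F_{k+1}(n-1)}F_k(i)$. Your version is shorter and avoids the nested induction entirely. For part~(2) the overall strategy (reduce to a pointwise inequality $F_{k+1}(n)\gtrsim 4\,F_k(F_k(\cdot))$ for $n\ge 7$) matches, but the paper runs a proof by contradiction from the marginally stronger inequality $F_{k+1}(n)\ge 4\,F_k(F_k(n+1)+1)$, which it gets by unrolling the recursion \emph{four} levels, $F_{k+1}(n)=F_k^{(4)}(F_{k+1}(n-4))$, and feeding in $2^x\ge 4x$ once; this lets it treat all $k$ uniformly. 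You instead unroll only \emph{two} levels and compensate with $F_k(F_k(n+2))\ge 4F_k(F_k(n+1))$, which forces the $k=1$ versus $k\ge 2$ case split, since $F_1$ only doubles each step. Your direct argument (identify $n:=f_k(f_k(M))$, translate to the sandwich $F_k(F_k(n))\le M<F_k(F_k(n+1))$, conclude $4M<F_{k+1}(n)$) is cleaner to read than the paper's contradiction and makes explicit exactly where the hypothesis $M\ge F_k(F_k(7))$ enters, at the small cost of the extra case analysis at $k=1$.
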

\noindent For a proof, see Appendix~\ref{app}. We are now ready to present the main lemma.

\begin{lemma}\label{lem:recursive}
Let $C_0$ be the constant from Proposition~\ref{prop:GnD}.
For every $k \in \Nat$ and $C \ge C_0$ there is $n_0 := n_0(k,C)$ such that for all $n \ge n_0$ there
is an $n$-vertex $K_{2^k+1}$-free graph $G:=G^{k}(n,C)$ with the following properties:
\begin{itemize}
\item $\chi_f(G) \ge C^k$,
\item $\rho(G) \le 1.001 \cdot 3^{k} \cdot C$, and
\item $G[W]$ is $3^k$-colorable for every $W \subseteq V(G)$ such that $|W| \le f_k(f_k(n))$.
\end{itemize}
\end{lemma}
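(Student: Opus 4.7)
The plan is induction on $k$. For the base case $k=1$, take $G:=G^1(n,C)$ directly from Proposition~\ref{prop:GnD}: it is triangle-free (so $K_3$-free), satisfies $\chi_f(G)\ge n/\alpha(G)>C$ and $\rho(G)\le \chi_f(G)\le 1.001\cdot C\le 1.001\cdot 3\cdot C$, and since $f_1(f_1(n))=\lfloor \log_2\lfloor \log_2 n\rfloor\rfloor<\sqrt{\ln n}$ for large $n$, property~(\ref{prop:B}) of Proposition~\ref{prop:GnD} delivers $3$-colorability of every subgraph of size at most $f_1(f_1(n))$.

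For the inductive step $k\ge 2$, build $G^k$ as a composition $H\{G_1,\dots,G_\ell\}$, where $H:=G^1(\ell,C)$ is a macro graph from Proposition~\ref{prop:GnD} and each $G_i:=G^{k-1}(n_i,C)$ comes from the inductive hypothesis, padding with isolated vertices if necessary to reach exactly $n$ vertices. Choose $\ell$ and $(n_i)$ so that (i) $\sqrt{\ln \ell}\ge f_k(f_k(n))$; (ii) $n_i\ge F_{k-1}(F_{k-1}(f_k(f_k(n))))$ for every $i$, equivalently $f_{k-1}(f_{k-1}(n_i))\ge f_k(f_k(n))$; and (iii) the $n_i$ grow on an Ackermann scale, say $n_{i+1}\ge F_{k-1}(F_{k-1}(20\ell\cdot n_i))$, so that for any $X\subseteq V(G^k)$ almost all the mass of the ``big'' indices $\calB:=\{i:|X_i|\ge f_{k-1}(f_{k-1}(n_i))\}$ concentrates in the single largest one.

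With the parameters fixed, verify the four conclusions in turn. First, any clique in $G^k$ projects to a clique in the triangle-free $H$, so it occupies at most two $G_i$'s and has size at most $2\cdot 2^{k-1}=2^k$, giving the $K_{2^k+1}$-free property. Second, $\chi_f(G^k)\ge \chi_f(H)\cdot \min_i \chi_f(G_i)\ge C\cdot C^{k-1}=C^k$ by Proposition~\ref{prop:fracprod}. Third, the Hall ratio bound mimics Theorem~\ref{thm:ce}: setting $v_b:=\sum_{i\in \calB}|X_i|$, $\calS:=[\ell]\setminus \calB$, and $v_s:=|X|-v_b$, if $v_s\ge |X|/2$ then Proposition~\ref{prop:col}, induction (each $G_i[X_i]$ with $i\in \calS$ is $3^{k-1}$-colorable), and $\chi(H)\le 1.001\cdot C$ yield a $(1.001\cdot 3^{k-1}\cdot C)$-coloring of the subgraph on $\bigcup_{\calS}X_i$, whose largest color class has size at least $|X|/(1.001\cdot 3^k\cdot C)$; otherwise $v_b\ge |X|/2$ and condition~(iii) forces $|X_m|\ge 0.9\cdot v_b$ at the largest $m\in \calB$, so the inductive bound $\rho(G_m)\le 1.001\cdot 3^{k-1}\cdot C$ produces an independent set in $G_m[X_m]\subseteq G^k[X]$ of size at least $|X|/(1.001\cdot 3^k\cdot C)$. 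Fourth, for $|W|\le f_k(f_k(n))$, conditions (i) and (ii) immediately give $|J|:=|\{i:W_i\ne\emptyset\}|\le \sqrt{\ln \ell}$ and $|W_i|\le f_{k-1}(f_{k-1}(n_i))$, so $H[J]$ is $3$-colorable by Proposition~\ref{prop:GnD}~(\ref{prop:B}) and each $G_i[W_i]$ is $3^{k-1}$-colorable by induction; Proposition~\ref{prop:col} assembles these into a $3\cdot 3^{k-1}=3^k$ coloring of $G^k[W]$.

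The main obstacle is showing that (i)--(iii) are simultaneously satisfiable with $\sum n_i\le n$. Condition~(i) forces $\ell\ge \exp((f_k(f_k(n)))^2)$, while iterating (iii) $\ell$ times pushes $n_\ell$ to roughly $F_k(2\ell+O(1))$ by Fact~\ref{fact}(1), so we require $\ell\le f_k(n)/2-O(1)$. These two bounds on $\ell$ become compatible for $k\ge 2$ once $n$ is large enough; Fact~\ref{fact}(3), which bounds partial sums of $F_{k-1}$, then certifies $\sum n_i\le n$ so the padding step is legal, and Fact~\ref{fact}(2), namely $f_{k+1}(4M)<f_k(f_k(M))$, is exactly the slack that lets these estimates propagate cleanly from level $k-1$ to level $k$.
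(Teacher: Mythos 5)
Your proposal follows the same architecture as the paper's proof: induction on $k$, with $G^k$ built as a composition $H\{G_1,\dots,G_\ell\}$ of a small triangle-free macro graph $H$ from Proposition~\ref{prop:GnD} over inner blocks $G_i = G^{k-1}(\cdot,C)$ whose sizes grow on an Ackermann scale; the clique bound, the fractional-chromatic lower bound via Proposition~\ref{prop:fracprod}, the small/big dichotomy for the Hall ratio, and the colorability of tiny subgraphs via Proposition~\ref{prop:col} are all handled as in the paper. The one place your route genuinely diverges is your condition (i): you force $\sqrt{\ln\ell}\ge f_k(f_k(n))$, i.e.\ $\ell\ge\exp\left((f_k(f_k(n)))^2\right)$, so that Proposition~\ref{prop:GnD}(\ref{prop:B}) applies to $H[J]$. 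The paper avoids any lower bound of this kind by choosing $m\approx f_k(n)/4$ blocks and then \emph{observing} that for $k\ge 2$ one automatically has $|J|\le f_k(f_k(n))\le f_k(4m)\ll\log_2\log_2 m\le\sqrt{\ln m}$, so the needed inequality holds for free. Your extra constraint is harmless (since $\exp\left((f_k(f_k(n)))^2\right)\ll f_k(n)$ for $k\ge 2$ and $n$ large), but it turns the parameter feasibility check — which you yourself flag as ``the main obstacle'' — into a more delicate two-sided pinning of $\ell$, and you only sketch that check rather than verify it. The paper sidesteps this entirely by making concrete choices $b_1:=m$, $b_i:=F_k(m+3i-6)$, $b_m:=n-\sum_{i<m}b_i$ with $m$ maximal subject to $m+\sum_{i=2}^m F_k(m+3i-6)\le n$, which makes $\sum b_i=n$ exact (no padding), keeps each $b_i\ge m>n_0(k-1,C)$, and reduces all the needed inequalities to the three items of Fact~\ref{fact}. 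If you want to turn your sketch into a full proof, the missing work is precisely to pin down $\ell$ and $(n_i)$ explicitly, confirm $\ell\ge n_0(C)$ and each $n_i\ge n_0(k-1,C)$, and verify $\sum n_i\le n$ without hand-waving about the ``$20\ell$'' factor inside the iteration; the paper's choice shows one clean way to do it.
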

\begin{proof}
For any fixed $C \ge C_0$, we proceed by induction on $k$. As the case $k=1$
follows by letting $G:=G^1(n,C)$ from Proposition~\ref{prop:GnD}, we may assume $k\ge2$.

Let $M$ be the smallest positive integer such that $f_k(4M) \le f_{k-1}\left(f_{k-1}(M)\right)$.
Note that $M \le F_{k-1}(F_{k-1}(7))$ by the second property of Fact~\ref{fact}.
We set $n_0(k,C) := \max\left\{M,F_k\left(4 \cdot n_0(k-1,C)\right)\right\}$.
Given $n \ge n_0(k,C)$, we define $m$ to be the largest integer such that \[
m + \sum\limits_{i=2}^{m} F_k (m+3i-6) \le n.
\]
Note that $F_k(4m-1)>n$, as otherwise the third property of Fact~\ref{fact} yields
\[\left(m+1\right) + \sum\limits_{i=2}^{m+1} F_k (m+3i-6) \le F_k(4m-1) \le n,\]
contradicting the maximality of $m$.
Therefore, \[F_k(4m) > F_k(4m-1) > n_0(k,C) \ge F_k(4 n_0(k-1,C)),\] and hence $m > n_0(k-1,C)$.
We set $b_1 := m$, and $b_i := F_k(m+3i-6)$ for every $i=2,3,\dots,m-1$. Finally, we set $b_m := n - \sum_{i<m} b_i$.

Let $H:=G^1(m,C)$, and $G_i:=G^{k-1}(b_i,C)$ for all $i \in [m]$.
We define $G:= H\{G_1,G_2,\dots,G_m\}$. Clearly, the graph $G$ contains no $K_{2^{k}+1}$.
In the following three claims, we show that $G$ has the desired three properties:

\begin{claim}
$\chi_f(G) \ge C^k$. 
\end{claim}
\begin{proof}
By the induction hypothesis, $\chi_f(H) \ge C$ and $\chi_f(G_i) \ge C^{k-1}$ for all $i \in [m]$.
Therefore, Proposition~\ref{prop:fracprod} yields the desired lower-bound on $\chi_f(G)$.
\end{proof}

\begin{claim}
$\rho(G) \le 1.001 \cdot 3^{k} \cdot C$.
\end{claim}
\begin{proof}
Fix an $X \subseteq V(G)$. Our aim is to show that $\alpha\left(G[X]\right) \ge |X| / \left( 1.001 \cdot 3^{k} \cdot C\right)$.
For $i\in [m]$, let $X_i$  be $X \cap V(G_i)$. As in the proof of Theorem~\ref{thm:ce}, let
\[
\calS := \big\{i \in [m] : |X_i| \le f_{k-1}(f_{k-1}(b_i))\big\} \quad \mbox{ and } \quad \calB := [m] \setminus \calS .
\]

First, suppose the case $\left|\bigcup_{i \in \calS} X_i\right| \ge |X|/3$.
By the definition of $\calS$ and the properties of $G_i$,
every subgraph $G[X_i]$, where $i \in \calS$, has an independent set of size at least $|X_i|/3^{k-1}$. On the other hand, $\chi(H) < 1.001 \cdot C$,
so the projection of at least one of the color classes of the optimal coloring of $H$ on $\bigcup_{i \in \calS} X_i$
contains an independent set of size at least
\[
\sum\limits_{i \in \cal S} \frac{|X_i|}{3^{k-1}} \cdot \frac 1{1.001 \cdot C} \ge \frac{|X|}{1.001 \cdot 3^{k} \cdot C}\,.
\]

Now suppose $\left|\bigcup_{i \in \calB} X_i\right| \ge 2|X|/3$, and let $z$ be the maximum index in $\calB$.
If $z=1$, then $|X_1| \ge 2|X|/3$. On the other hand, if $z\ge2$, then
\[
f_{k-1}(f_{k-1}(b_z)) \ge f_{k-1}(f_{k-1}(F_k(m+3z-6))) =
F_k(m+3z-8) \ge \sum\limits_{i < z} b_i ,
\]
where the equality and the last inequality follow from the first and the third property of Fact~\ref{fact}, respectively.
Therefore, $|X_z| \ge {|X|}/3$ and $G_z[X_z]$ contains an independent set of the sought size by $\rho(G_z) \le 1.001 \cdot 3^{k-1} \cdot C$.
\end{proof}

\begin{claim}
$G[W]$ is $3^k$-colorable for every $W \subseteq V$ with $|W| \le f_k(f_k(n))$.
\end{claim}
Fix a set $W \subseteq V$ of size at most $f_k(f_k(n))$. Firstly, let $Z := \{i: W \cap V(G_i) \neq \emptyset\}$.
Clearly, $|Z| \le |W| \le f_k(f_k(n))$.  Since $f_k(n) \le 4m$ and $f_k(x) \ll \log_2\log_2(x/4)$, 
we conclude that $|Z| \le \log_2\log_2(v(H))$. Therefore, there exists a proper $3$-coloring of the induced subgraph $H[Z]$.

By the second property of Fact~\ref{fact}, for every $i \in [m]$ it holds that
\[
|V(G_i) \cap W| \le |W| \le f_k(4m) \le f_{k-1}(f_{k-1}(m)) \le f_{k-1}(f_{k-1}(b_i)).
\]
Therefore, the induction hypothesis yields that each $V(G_i) \cap W$ induces a $3^{k-1}$-colorable
subgraph of $G$, and hence $\chi(G[W]) \le 3^k$ by Proposition~\ref{prop:col}. 
\end{proof}

Theorem~\ref{thm:k5free} is a direct consequence of Lemma~\ref{lem:recursive} applied with $k=2$.
It remains to establish Theorem~\ref{thm:recursive}:

\begin{proof}[Proof of Theorem~\ref{thm:recursive}]
Let $P_0 := \left(2C_0\right)^2$. Given $P \ge P_0$, let $C:= \sqrt{P/1.001}$  and $k:=\lfloor \log_3 C \rfloor$. 
Applying Lemma~\ref{lem:recursive} with $k$ and $C$ yields an $n_0(k,C)$-vertex graph $G$ with
$\rho(G) \le P$ and
$
\chi_f(G) \ge C^{\lfloor \log_3 C \rfloor} > e^{0.9 \cdot \ln^2(C)} > e^{\ln^2(P)/5}
$.
\end{proof}

\section{Concluding remarks}\label{sec:outro}

We presented various constructions of graphs where the
fractional chromatic number grows much faster than the Hall ratio, which
refuted Conjecture~\ref{conj:hall}.  It is natural to ask whether the
conclusion in Conjecture~\ref{conj:hall} can be relaxed and the fractional
chromatic number of a graph is always upper-bounded by some function of its
Hall ratio.
\begin{question}
\label{q:bounded}
Is there a function $g:\Real \to \Real$ such that $\chi_f(G) \le g(\rho(G))$ for~every graph $G$?
\end{question}
Theorem~\ref{thm:recursive} shows that if such a function $g$ exists, then
$g(x) \ge e^{\ln^2(x)/5}$.
While preparing our manuscript, we have learned that Dvořák, Ossona de Mendez and Wu~\cite{bib:dow} constructed graphs with
Hall ratio at most $18$ and arbitrary large fractional chromatic number. Therefore,
the answer to Question~\ref{q:bounded} is no.

Conjecture~\ref{conj:hall} was partially motivated by another conjecture of
Harris concerned with fractional colorings of triangle-free graphs, which was
inspired by a famous result of Johansson~\cite{bib:johansson} (for a recent
short proof, see~\cite{bib:molloy}) stating that
$\chi(G) = O(\Delta/\ln \Delta)$ for every triangle-free graph $G$ with maximum degree
$\Delta$.
\begin{conj}[{\cite[Conjecture 6.2]{bib:harris}}]
\label{conj:frac}
There is $C$ such that $\chi_f(G) \le C \cdot d/\ln d$ for every triangle-free $d$-degenerate graph $G$. 
\end{conj}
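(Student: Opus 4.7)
The plan is to strengthen the known bound $\rho(G) = O(d/\ln d)$, which follows from the Ajtai--Koml\'os--Szemer\'edi lower bound on the independence number of triangle-free graphs (applied to every subgraph of $G$, each of which is $d$-degenerate and hence has average degree at most $2d$), to the sought bound on $\chi_f(G)$. Since the constructions of this paper show that $\chi_f$ can be far larger than $\rho$ in general, triangle-freeness must be used essentially; the aim is to rule out the type of ``layered join'' amplification used in Theorem~\ref{thm:ce} and Lemma~\ref{lem:recursive}.

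The main tool of choice would be the hard-core model on independent sets, in the spirit of the Davies--Kang--Pirot--Sereni bound $\chi_f(G) \le (1+o(1))\Delta/\ln\Delta$ for triangle-free graphs of maximum degree $\Delta$. Concretely, given a $d$-degenerate triangle-free graph $G$, I would consider a random independent set $I$ drawn from the hard-core distribution at a fugacity $\lambda$ of order $(\ln d)/d$. A local analysis in a triangle-free graph (where each vertex's neighborhood is itself independent) is expected to show that every vertex lies in $I$ with marginal probability at least $\Omega((\ln d)/d)$, and then viewing the law of $I$ as a fractional coloring gives $\chi_f(G) = O(d/\ln d)$ on the portion of $G$ where the local analysis applies.

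The hard part will be handling vertices of very large degree, for which the hard-core analysis breaks down: in a $d$-degenerate graph a vertex can have degree $\Theta(n)$, and the marginal probability that such a vertex lies in $I$ decays sharply as its degree grows. My approach would be to peel off a set $H$ of ``hubs'' of degree exceeding some threshold $T := d \ln^2 d$, use the hard-core argument on $G - H$ (whose maximum degree is bounded by $T$, so that the Davies--Kang--Pirot--Sereni result applies directly), and then extend the fractional coloring over $H$ by exploiting that each hub has an independent neighborhood in $G$ by triangle-freeness. The principal technical obstacle is to carry out this extension iteratively, since $G - H$ may itself contain new hubs after one peeling step, without accumulating a $\ln d$ factor at each layer that would ruin the target $d/\ln d$ bound; an LP-duality calculation comparing optimal \textsc{fracd} weightings on $G$ and $G - H$ through the independent hub-neighborhoods seems the most promising route to absorb the extension cost into a single constant.
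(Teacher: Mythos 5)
This statement is not a theorem of the paper at all: it is Harris's Conjecture~6.2, quoted verbatim and explicitly labeled as a conjecture. The paper offers no proof; it discusses it in the concluding section purely as motivation, noting that Ajtai--Koml\'os--Szemer\'edi gives $\rho(G) = O(d/\ln d)$ for such $G$ (so a positive answer to Question~\ref{q:trgfree} would imply the conjecture), and that, by~\cite{bib:ekt}, the conjecture would in turn imply Conjecture~\ref{conj:bip}. So there is no ``paper's own proof'' to compare against, and any complete proof you produced would be new.

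As for the proposal itself, it is a research plan rather than a proof, and the one place where it commits to concrete parameters already fails numerically. With threshold $T := d\ln^2 d$, the peeled graph $G - H$ has maximum degree at most $T$, and the Davies--Kang--Pirot--Sereni bound then gives $\chi_f(G-H) = O\bigl(T/\ln T\bigr) = O\bigl(d\ln^2 d / \ln d\bigr) = O(d\ln d)$, which overshoots the target $O(d/\ln d)$ by a factor of $\ln^2 d$ before you have even touched $H$. Pushing $T$ down to $O(d)$ fixes $\chi_f(G-H)$ but then the degeneracy bound $\sum_v \deg(v) \le 2nd$ only caps $|H|$ at $\Theta(n)$, so $H$ carries essentially the whole graph and the peeling buys nothing. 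And the actual crux --- how to extend a fractional coloring across $H$ using only triangle-freeness, without paying an extra factor per layer --- is explicitly left as ``the most promising route,'' i.e., unsolved. This is precisely the obstruction that separates the $d$-degenerate setting from the bounded-degree setting where the hard-core analysis works, and the proposal does not resolve it; the tension between the size of $H$ and the degree bound on $G-H$ is the whole difficulty, not a technicality to be absorbed by an LP-duality calculation. Also note that after a single peel $G-H$ has no vertices of degree $>T$, so the worry about ``new hubs'' is misplaced; the real iteration one might worry about is recursing into $G[H]$, where the accumulation concern is legitimate but, again, not addressed.
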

A classical result of Ajtai, Koml\'os, and Szemer\'edi~\cite{bib:aks} together
with an averaging argument yield that $\rho(G) = O(d/\ln d)$ for $G$ and $d$ as above.
Therefore, if Conjecture~\ref{conj:hall} could be recovered in the
triangle-free setting, it would immediately yield the sought bound on $\chi_f$
in Conjecture~\ref{conj:frac}. 
\begin{question} \label{q:trgfree}
Is there $C$ such that $\chi_f(G) \le C\cdot\rho(G)$ for every triangle-free graph~$G$?
\end{question}
In~\cite{bib:johnson}, it has been mentioned that the sequence of Mycelski graphs might provide a negative
answer to Question~\ref{q:trgfree}, but we still do not know.
For $K_5$-free graphs, Theorem~\ref{thm:k5free} shows that the answer is
definitely negative. As a possibly simpler question, does the answer stay negative in case of $K_4$-free graphs?
\begin{question}
\label{q:k4free}
Is there $C$ such that $\chi_f(G) \le C\cdot\rho(G)$ for every $K_4$-free graph~$G$?
\end{question}

Let us conclude with an additional motivation for studying Conjecture~\ref{conj:frac}.
Esperet, Kang and Thomass\'e~\cite{bib:ekt} conjectured that 
dense triangle-free graphs must contain dense induced bipartite subgraphs.
\begin{conj}[{\cite[Conjecture 1.5]{bib:ekt}}]\label{conj:bip}
There exists $C>0$ such that any triangle-free graph with minimum degree at least
$d$ contains an induced bipartite subgraph of minimum degree at least $C \cdot \ln d$.
\end{conj}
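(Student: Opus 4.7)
The plan is to attack Conjecture~\ref{conj:bip} by combining Johansson's theorem with a random-pair-of-color-classes argument. Johansson's theorem gives a proper coloring $\phi:V(G)\to [k]$ of a triangle-free graph of minimum degree $d$ using $k=O(d/\ln d)$ colors. Because every color class is independent, $G[A\cup B]$ is bipartite for any two classes $A,B$, so the ``induced bipartite'' requirement is automatic, and the problem reduces to finding a pair $(A,B)$ for which $G[A\cup B]$ has minimum degree $\Omega(\ln d)$.

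First, sample an unordered pair $\{A,B\}$ uniformly from the $\binom{k}{2}$ options. For a fixed vertex $v$ with $d_v\ge d$, its neighbors sit in the $k-1$ classes other than $\phi(v)$, so once we condition on $v\in A\cup B$ the expected number of neighbors of $v$ in the opposite class is $d_v/(k-1)=\Theta(\ln d)$. Thus on average each retained vertex meets the target degree. To convert this average guarantee into a minimum-degree guarantee while preserving bipartiteness, I would then iteratively delete from $G[A\cup B]$ any vertex of degree below some threshold $c\ln d$; since any induced subgraph of an induced bipartite subgraph is itself induced and bipartite, peeling cannot destroy the structural property, and it suffices to show that the peeling terminates with a nontrivial remainder.

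The main obstacle is \emph{concentration}. The expected per-vertex degree in $G[A\cup B]$ is $\Theta(\ln d)$, but a single vertex could have its $d$ neighbors concentrated in $O(1)$ of the $k$ classes, contributing $0$ to almost every choice of $B$ and the full $d$ to a handful, so that the peeling step cascades uncontrollably. Standard Johansson-style colorings do not control this second-moment behavior directly. I would therefore replace the coloring step by Molloy's entropy-compression proof, which produces a random coloring in which each vertex sees a roughly uniform distribution of colors across its neighborhood; combined with a Lov\'asz Local Lemma argument over the random choice of $(A,B)$, one can hope to show that with positive probability only a vanishing fraction of $A\cup B$ lies below the threshold. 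The delicate point, which is the hardest part of the plan, is bounding the cascade effect of the peeling: low-degree vertices removed in one round can drag their (otherwise healthy) neighbors below the threshold in the next, and controlling this likely requires either a global counting argument or a recursive application of the coloring lemma inside each side of the bipartition — and it is entirely possible that such a purely local random-coloring approach is insufficient and that genuinely new ideas are needed, as the Dvo\v{r}\'ak--Ossona de Mendez--Wu construction already shows that the Hall-ratio route to Conjecture~\ref{conj:frac} is closed off.
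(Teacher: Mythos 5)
This statement is a \emph{conjecture} (Conjecture 1.5 of Esperet, Kang and Thomass\'e), not a theorem of the paper, and the paper offers no proof of it. It appears in the concluding remarks purely as motivation: the authors point out that by \cite[Theorem 3.1]{bib:ekt}, Conjecture~\ref{conj:frac} would imply Conjecture~\ref{conj:bip}, and that the best partial progress at the time of writing, due to Kwan, Letzter, Sudakov and Tran~\cite{bib:klst}, replaces $C\ln d$ by $C\ln d/\ln\ln d$. So there is no ``paper proof'' to compare your attempt against, and you should not expect to reconstruct one from this note.

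As for the sketch itself: you correctly identify the fatal gap, and it is genuinely fatal rather than a technicality to be cleaned up. Johansson's theorem only bounds the number of colors; it says nothing about how a given vertex's $d$ neighbors are distributed among the $k-1$ classes other than its own. In the worst case (which triangle-freeness does not rule out), a vertex's entire neighborhood could be a single independent set assigned a single color, so that after conditioning on $v\in A$, the degree of $v$ in $G[A\cup B]$ is $0$ for all but one choice of $B$ and $d$ for that one choice. The expectation $\Theta(\ln d)$ you compute is correct but carries no pointwise information, and no amount of peeling rescues it: the peeling would simply remove essentially every vertex. Switching from Johansson to Molloy's entropy-compression proof does not fix this either, because Molloy's argument also controls only the palette size and the local lists, not the second-moment spread of colors across a vertex's neighborhood; the LLL step you propose has no small ``bad events'' to union over, since ``$v$ has small degree into $B$'' is the overwhelmingly likely event here, not a rare one. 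This is precisely why the Kwan--Letzter--Sudakov--Tran argument is not a two-random-color-classes argument and still loses a $\ln\ln d$ factor. Your plan is thus a reasonable first attempt, but it needs an idea that forces color classes (or some other partition into independent sets) to be ``balanced'' in every neighborhood, and that is the actual content of the open problem.
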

Erd\H{o}s-Rényi random graphs of the appropriate density show that the bound would be, up to the constant $C$, best possible.
A relation between the fractional chromatic number and induced bipartite
subgraphs proven in~\cite[Theorem 3.1]{bib:ekt} shows that if Conjecture~\ref{conj:frac}
holds, then Conjecture~\ref{conj:bip} holds as well.
Very recently, Kwan, Letzter, Sudakov and Tran~\cite{bib:klst} proved a slightly 
weaker version of Conjecture~\ref{conj:bip} where the bound $C \cdot \ln n$ is
replaced by $C \cdot \ln n / \ln\ln n$.

\section*{Acknowledgments}
The authors thank Zdeněk Dvořák for sharing the manuscript of~\cite{bib:dow} with them,
and Józsi Balogh, Sasha Kostochka and Yani Pehova for fruitful discussions at the beginning
of the project.
The authors also thank to the anonymous referees for carefully reading the
manuscript and for their valuable comments, which greatly improved the
presentation of our results.

This work was partially supported by NSF-DMS grant \#1604458
``Collaborative Research: Rocky Mountain Great Plains Graduate Research Workshops in Combinatorics''
and by NSA grant H98230-18-1-0017
``The 2018 and 2019 Rocky Mountain Great Plains Graduate Research Workshop in Combinatorics''.

\appendix
\section{Proof of Fact~\ref{fact}}\label{app}
The definitions of $f_k$ and $F_{k+1}$ readily yield that $f_k\left(F_{k+1}(n+1)\right) = F_{k+1}(n)$.
Therefore, $f_k\left(f_k\left(F_{k+1}(n+2)\right)\right) = F_{k+1}(n)$ proving the first property.

For every $k,n \in \Nat$, a straightforward induction yields that $F_k(n) \ge n+1$. This
in turn implies that $F_{k+1}(n) = F_k(F_{k+1}(n-1)) \ge F_k(n) \ge 2^n$.
Similarly, for all $k\in \Nat$, the functions $F_k(\cdot)$ and $f_k(\cdot)$ are monotone non-decreasing.
Therefore, 
for all $k \in \Nat$  and $n \ge 7$, it holds that
\[
F_{k+1}(n) = F_k(F_k(F_k(F_k(F_{k+1}(n-4))))) \ge 2^{F_k\left(F_k\left(2^{n-4}\right)+1\right)}
\ge 4 \cdot F_k(F_k\left(n+1\right)+1) 
.
\]
Since $F_k(f_k(M)+1) > M \ge F_k(f_k(M))$, we assert that
$f_{k+1}(4M) < f_k(f_k(M))$ for all $M \ge F_k(F_k(7))$.
Indeed, as otherwise 
\[
4M \ge F_{k+1}(f_{k+1}(4M)) \ge F_{k+1}(f_k(f_k(M))) \ge 4\cdot F_k(F_k\left(f_k(f_k(M))+1\right)+1) > 4M
,\]
a contradiction. This concludes the proof of the second property.

The last property is proven by induction on $k$.
Indeed, the case $k=1$ is the sum of a geometric progression. If $k\ge2$, then by induction hypothesis
\[
\sum_{b=0}^n F_{k+1}(b) = \sum_{b=0}^n F_k(F_{k+1}(b-1)) 
\le
\sum_{i=0}^{F_{k+1}(n-1)} F_k(i) < F_k(F_{k+1}(n-1)+1)
.
\]
However, the right-hand side is at most $F_k(F_k(F_{k+1}(n-1))) = F_{k+1}(n+1)$.

\end{document}